\documentclass[reqno]{amsart}

\usepackage{header}
\definecolor{shadecolor}{cmyk}{0.3,0.1,0,0}

\title{Tree Dimension and the Sauer--Shelah Dichotomy}
\author{Roland Walker}

\begin{document}

\begin{abstract}
    We introduce tree dimension and its leveled variant in order to measure the complexity of leaf sets in binary trees. We then provide a tight upper bound on the size of such sets using leveled tree dimension. This, in turn, implies both the famous Sauer--Shelah Lemma for VC dimension and Bhaskar's version for Littlestone dimension, giving clearer insight into why these results place the exact same upper bound on their respective shatter functions. We also classify the isomorphism types of maximal leaf sets by tree dimension. Finally, we generalize this analysis to higher-arity trees.
\end{abstract}

\maketitle

\section{Binary Trees}\label{s:binary trees}
Let $T_n = (2^{\leq n}, \prec )$ be the \emph{binary tree} of height $n$. Its \emph{nodes} are the binary sequences of length at most $n$, and its \emph{ancestor relation} is given by 
$a\prec b$ if and only if $a$ is a proper initial segment of $b$. We call the empty sequence the \emph{root}, while sequences of length $n$ are called \emph{leaves}. Each leaf $b \in 2^n$ determines a \emph{branch} $\check{b}$ which contains all the initial segments of $b$ including $b$ itself. 
Given a set of leaves $B\subseteq 2^n$, we use $\check{B}$ to denote the corresponding set of branches $\{\check{b}:b\in B\}$. 

\begin{definition}
If $B\subseteq 2^n$ is a nonempty set of leaves, we define its \emph{tree dimension}, written $\TD(B)$, to be the largest nonnegative integer $d$ such that $T_d$ embeds in $(\check{B},\prec)$.
If $B$ is empty, we set $\TD(B)=-1$.
\end{definition}

In this paper, we use the model-theoretic notions of isomorphism and embedding. For a brief review, see the appendix.

Let $\widetilde{T}_n = (2^{\leq n}, \prec, \wedge, \sim)$ be the expansion of $T_n$ where the \emph{meet} function is interpreted so that $b \wedge b' = a$ if and only if $a$ is the largest initial segment of $b$ that is also an initial segment of $b'$ and 
the \emph{level} relation is interpreted so that $b \sim b'$ if and only if $|b|=|b'|$.
We call $\widetilde{T}_n$ the \emph{leveled binary tree} of height $n$. 

\begin{definition}
If $B\subseteq 2^n$ is a nonempty set of leaves, we define its \emph{leveled tree dimension}, written $\LTD(B)$, to be the largest nonnegative integer $d$ such that $\widetilde{T}_d$ embeds in $(\check{B},\prec, \wedge, \sim)$.
If $B$ is empty, we set $\LTD(B)=-1$. 
\end{definition}

Notice that for any $B\subseteq 2^n$, we have $\LTD(B)\leq \TD(B)$. This inequality can be strict; for example, if \[
B=\{000, 010, 100, 101\},
\]
then $\LTD(B) =1$ and $\TD(B) = 2$. See Figure~\ref{fig:strict ineq}.

\begin{figure}[htbp]
\centering
\begin{tikzpicture}[scale=0.8]
    \draw (0,0) -- (2,1);
    \draw (0,0) -- (-2,1);
    \filldraw[black] (0,0) circle (2pt) node[anchor=north]{root};
    
    \draw (2,1) -- (2,2);
    \filldraw[black] (2,1) circle (2pt) node[anchor=north west]{1};
    \draw (-2,1) -- (-1,2);
    \filldraw[black] (-2,1) circle (2pt) node[anchor=north east]{0};

    \draw (-2,1) -- (-3,2);
    \filldraw[black] (-3,2) circle (2pt) node[anchor=north east]{00};
    \draw (-1,2) -- (-1,3);
    \filldraw[black] (-1,2) circle (2pt) node[anchor=north west]{01};
    \draw (2,2) -- (1,3);
    \filldraw[black] (2,2) circle (2pt) node[anchor=north west]{10};

    \draw (-3,2) -- (-3,3);
    \filldraw[black] (-3,3) circle (2pt) node[anchor=south]{000};
    \filldraw[black] (-1,3) circle (2pt) node[anchor=south]{010};
    \draw (2,2) -- (3,3);
    \filldraw[black] (3,3) circle (2pt) node[anchor=south]{101};
    \filldraw[black] (1,3) circle (2pt) node[anchor=south]{100};
\end{tikzpicture}
\caption{The branch diagram for a leaf set $B \subseteq 2^3$ with $\LTD(B) < \TD(B)$.}
\label{fig:strict ineq}
\end{figure}

\begin{theorem}\label{thm:ltd}
Given a set of leaves $B\subseteq 2^n$, if $\LTD(B) \leq d$, then 
\begin{equation}\label{eq:thm ltd}
|B| \leq \binom{n}{0} + \binom{n}{1}+ \cdots + \binom{n}{d}.
\end{equation}
\end{theorem}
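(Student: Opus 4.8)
The plan is to induct on the height $n$. Write $f(n,d)=\binom{n}{0}+\cdots+\binom{n}{d}$ for the right-hand side of \eqref{eq:thm ltd}; Pascal's identity gives $f(n,d)=f(n-1,d)+f(n-1,d-1)$, and matching this recursion is the whole game. If $B=\emptyset$ the claim is trivial, so assume $d\geq 0$. The base case $n=0$ is immediate, since then $B\subseteq 2^0$ has at most one element and $f(0,d)=1$. For the inductive step I split $B$ along its \emph{last} coordinate: let $B_0,B_1\subseteq 2^{n-1}$ consist of those $b'$ with $b'0\in B$, respectively $b'1\in B$, and set $B_\cup=B_0\cup B_1$ and $B_\cap=B_0\cap B_1$. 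By inclusion--exclusion $|B|=|B_0|+|B_1|=|B_\cup|+|B_\cap|$, so applying the inductive hypothesis to $B_\cup,B_\cap\subseteq 2^{n-1}$ it suffices to prove
\[
\text{(a) } \LTD(B_\cup)\leq d \qquad\text{and}\qquad \text{(b) } \LTD(B_\cap)\leq d-1.
\]

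For (a) I would observe that splitting on the last coordinate keeps the tree structure transparent: a string $b'$ lies in $B_\cup$ exactly when it is a length-$(n-1)$ initial segment of some leaf of $B$, so taking initial segments gives $\check{B}_\cup=\check{B}\cap 2^{\leq n-1}$ -- that is, $\check{B}_\cup$ is precisely $\check B$ with its bottom level deleted. This subset is closed under $\wedge$ (the meet of two nodes of length $\leq n-1$ again has length $\leq n-1$), while $\prec$ and $\sim$ simply restrict, so the inclusion $\check{B}_\cup\hookrightarrow\check B$ is an embedding of leveled trees. Composing any witness $\widetilde T_{d'}\hookrightarrow\check{B}_\cup$ with this inclusion yields $\widetilde T_{d'}\hookrightarrow\check B$, whence $\LTD(B_\cup)\leq\LTD(B)\leq d$. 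This is exactly where splitting on the last rather than the first coordinate pays off: splitting at the root does \emph{not} realize $\check{B}_\cup$ as a substructure of $\check B$.

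For (b), set $e=\LTD(B_\cap)$ and fix a witnessing embedding $h\colon\widetilde T_e\hookrightarrow\check{B}_\cap$; I want to manufacture an embedding $\widetilde T_{e+1}\hookrightarrow\check B$, which gives $\LTD(B)\geq e+1$ and hence (b). Each leaf $\ell$ of $\widetilde T_e$ has image $h(\ell)$ equal to an initial segment of some $b_\ell\in B_\cap$, and because $b_\ell\in B_0\cap B_1$ both $b_\ell 0$ and $b_\ell 1$ are leaves of $B$. The construction is to keep $h$ on the internal nodes, re-base each leaf $\ell$ to the branch point $b_\ell$, and then hang its two new children as $b_\ell 0$ and $b_\ell 1$, which sit at the common bottom level $n$. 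The main obstacle is verifying that this map is an embedding, and the delicate point is the meet relation: the two children of $\ell$ must meet exactly at the image of $\ell$, and since $b_\ell 0\wedge b_\ell 1=b_\ell$ this is what forces me to send $\ell$ to the genuine branch point $b_\ell$ rather than to $h(\ell)$. I would then check that this downward re-basing disturbs nothing else -- because each $b_\ell$ agrees with $h(\ell)$ on its first $|h(\ell)|$ coordinates, the region where $h$ already laid out the levels and meets of $\widetilde T_e$, every meet among internal nodes and leaves is preserved; the level relation $\sim$ is preserved since all the $b_\ell$ share the length $n-1$ while the internal images are strictly shorter; and injectivity holds because distinct leaves of $\widetilde T_e$ have incomparable $h$-images and hence distinct branch points. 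Granting these routine but slightly fiddly verifications, (a) and (b) close the induction.
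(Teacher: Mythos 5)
Your proof is correct and takes essentially the same route as the paper: your $B_\cup$ and $B_\cap$ are exactly the paper's $A_1=\{a\in 2^n: a\in\check{B}\}$ and $A_2=\{a\in 2^n:\{a^\frown 0,a^\frown 1\}\subseteq B\}$, and the identity $|B|=|B_\cup|+|B_\cap|$ together with Pascal's rule is the same counting step. The only difference is that you spell out the verifications that $\LTD(B_\cup)\leq d$ and $\LTD(B_\cap)\leq d-1$ (the substructure inclusion and the re-based embedding of $\widetilde{T}_{e+1}$), which the paper asserts without detail.
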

\begin{proof}
The theorem clearly holds when $d = -1$ since, in this case, $B$ must be empty. It also holds when $d=n$ since
\[
\binom{n}{0}+\binom{n}{1}+\cdots+\binom{n}{n} =2^n.
\]

Assume the theorem holds for some $n \geq 0$, and let $B\subseteq 2^{n+1}$. Suppose $\LTD(B)\leq d$ with $0\leq d\leq n$. Let $A_1 = \{a\in 2^n: a\in \check{B}\}$.
Since $\LTD(A_1)\leq d$, our assumption implies that 
\begin{equation*} \label{eq:thm ltd A_1}
|A_1| \leq \binom{n}{0}+ \binom{n}{1}+\cdots+\binom{n}{d}.
\end{equation*}
Let $A_2 = \{a\in 2^n: \{a^{\frown}0, a^{\frown}1\}\subseteq B\}$.
We must have $\LTD(A_2)\leq d-1$, so our assumption implies that 
\begin{equation*}\label{eq:thm ltd A_2}
|A_2| \leq \binom{n}{0}+\binom{n}{1}+\cdots +\binom{n}{d-1}.
\end{equation*}
It follows that 
\[
|B| = |A_1|+|A_2|\leq \binom{n+1}{0}+\binom{n+1}{1}+\cdots +\binom{n+1}{d} 
\]
as desired.
\end{proof}

\subsection{Applications}

Fix a set $X$, and let $\mathcal{F}$ be a family of subsets of $X$. 
\begin{definition}
Given a finite $A\subseteq X$, we say $\mathcal{F}$ \emph{shatters} $A$ if $\{F\cap A: F\in \mathcal{F}\} = \mathcal{P}(A)$.
\end{definition}

\begin{definition}
If $\mathcal F$ is nonempty, then the \emph{Vapnik--Chervonenkis (VC) dimension}\footnote{Sauer refers to this quantity as ``density''  in \cite{sauer:densityoffamilies}, but it has become widely known as VC dimension since it appears in \cite[Theorem 1]{vapnikchervonenkis:uniformconvergence}, a slightly weaker precursor of the Sauer--Shelah Lemma.} of $\mathcal F$ is given by 
\[\VC(\mathcal{F}) = \sup\{|A| : \mathcal{F} \text{ shatters } A\}.\]
If $\mathcal F$ is empty, we set $\VC(\mathcal F) = -1$.
\end{definition}

\begin{definition}
Given $\bar{a} = (a_0, \ldots, a_{n-1}) \in X^n$, we define the characteristic function $\chi_{\bar{a}} : \mathcal{F} \to 2^n$ so that \[\chi_{\bar{a}}(F)(i) = \begin{cases}
    1 & \text{if } a_i \in F, \\
    0 & \text{otherwise}.
\end{cases}\]
\end{definition}

Notice that for all $\bar{a}\in X^n$, we have $\LTD(\chi_{\bar{a}}(\mathcal{F})) \leq \VC(\mathcal{F})$, so Theorem~\ref{thm:ltd} immediately yields the original Sauer--Shelah Lemma \cite[Theorem 2]{sauer:densityoffamilies} as a corollary.

\begin{cor}[Sauer--Shelah Lemma]
Given $A\subseteq X$, if $|A|\leq n$ and $\VC(\mathcal{F}) \leq d$, then 
\begin{equation}\label{eq:ss}
|\{F\cap A: F\in \mathcal{F}\}| \leq \binom{n}{0} + \binom{n}{1}+ \cdots + \binom{n}{d}.
\end{equation}
\end{cor}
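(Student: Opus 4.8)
The plan is to reduce the statement directly to Theorem~\ref{thm:ltd} by passing through the characteristic function. Write $m = |A|$ and fix an enumeration $\bar{a} = (a_0, \ldots, a_{m-1})$ of $A$, so that $\chi_{\bar{a}} \colon \mathcal{F} \to 2^m$ is defined and $B := \chi_{\bar{a}}(\mathcal{F}) \subseteq 2^m$ is a set of leaves. First I would record the elementary bookkeeping fact that
\[
|\{F\cap A : F\in\mathcal{F}\}| = |B|.
\]
Indeed, since $a_i\in A$ for every $i$, the value $\chi_{\bar{a}}(F)(i)$ depends only on $F\cap A$, and conversely $F\cap A$ is recovered from $\chi_{\bar{a}}(F)$ as $\{a_i : \chi_{\bar{a}}(F)(i)=1\}$; hence $F\cap A \mapsto \chi_{\bar{a}}(F)$ is a well-defined bijection from $\{F\cap A : F\in\mathcal{F}\}$ onto $B$.

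The conceptual heart of the argument is the inequality $\LTD(B)\leq\VC(\mathcal{F})$ (the remark preceding the statement), which I would verify directly, since this is where the \emph{leveled} hypothesis does its work. Suppose $\widetilde{T}_d$ embeds in $(\check{B},\prec,\wedge,\sim)$. Because the embedding preserves both $\sim$ and $\wedge$, all branchings occurring at a fixed level of $\widetilde{T}_d$ are forced to split at one common coordinate: level-preservation sends each level of $\widetilde{T}_d$ to a single depth in $\check{B}$, and meet-preservation then pins the splitting coordinate of every node at that level to the depth of its image. This yields $d$ depths $i_1 < \cdots < i_d$ in $\{0,\ldots,m-1\}$ such that the two children of every internal node are distinguished exactly by their bit at the appropriate $i_k$. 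Extending each of the $2^d$ images of the leaves of $\widetilde{T}_d$ to a leaf of $B$ and pulling back through $\chi_{\bar{a}}$ produces members of $\mathcal{F}$ realizing all $2^d$ patterns on the coordinates $i_1,\ldots,i_d$; hence $\mathcal{F}$ shatters $\{a_{i_1},\ldots,a_{i_d}\}$ and $\VC(\mathcal{F})\geq d$. Taking $d=\LTD(B)$ gives the claim.

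With these two facts in hand the corollary follows quickly. Since $\LTD(B)\leq\VC(\mathcal{F})\leq d$, Theorem~\ref{thm:ltd} applied with $m$ in place of $n$ yields
\[
|B| \leq \binom{m}{0}+\binom{m}{1}+\cdots+\binom{m}{d}.
\]
Because $m\leq n$ we have $\binom{m}{k}\leq\binom{n}{k}$ for every $k$, so the right-hand side is at most $\binom{n}{0}+\cdots+\binom{n}{d}$, which is exactly the bound in \eqref{eq:ss}; the degenerate cases $A=\emptyset$ and $\mathcal{F}=\emptyset$ (where $\VC(\mathcal{F})=-1$) I would dispose of by inspection. The main obstacle is not the final arithmetic but the embedded inequality $\LTD(B)\leq\VC(\mathcal{F})$: one must check that level- and meet-preservation really do pin every branching of $\widetilde{T}_d$ to a single coordinate, since it is precisely this rigidity — absent for ordinary tree dimension, which instead controls Littlestone dimension — that converts a tree embedding into a genuine shattered set.
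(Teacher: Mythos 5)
Your argument is correct and is essentially the paper's own proof: the corollary is obtained by observing that $\LTD(\chi_{\bar{a}}(\mathcal{F})) \leq \VC(\mathcal{F})$ and then invoking Theorem~\ref{thm:ltd}. You have simply written out in full the verification that the paper leaves as a ``notice that'' remark (and handled the bookkeeping when $|A| < n$), so nothing further is needed.
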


Many proofs of this famous result have been recorded in the literature. Theorem~\ref{thm:ltd} not only provides a novel, straightforward proof of the original Sauer--Shelah Lemma for bounded VC dimension but also unifies it with the analogous result for bounded Littlestone dimension (see Corollary~\ref{cor:stable ss}). This gives clear insight into why the right hand sides of (\ref{eq:ss}) and (\ref{eq:stable ss}) are identical.

\begin{definition}
Given a labeling $\alpha: 2^{<n}\to X$, we define the characteristic function $\chi_{\alpha}:\mathcal{F} \to 2^n$ so that for each $s \prec \chi_{\alpha}(F)$, the sequence $s^{\frown}1$ is an initial segment of $\chi_{\alpha}(F)$ if and only if $\alpha(s)\in F$.
\end{definition}

\begin{definition}
If $\mathcal{F}$ is nonempty, then the \emph{Littlestone dimension}\footnote{Bhaskar calls this quantity ``thicket dimension'' in \cite[Definition 3.6]{bhaskar:thicketdensity} but notes its other names in \cite[Remark 3.7]{bhaskar:thicketdensity}. See also \cite[Definition 8]{bendavid:agnosticonlinelearning} and \cite{littlestone:learningquickly}.} of $\mathcal{F}$ is given by
\[\LD(\mathcal{F}) = \sup\{d : \chi_\alpha(\mathcal{F})=2^{d} \text{ for some labeling } \alpha : 2^{<d}\rightarrow X\}.\]
If $\mathcal{F}$ is empty, we set $\LD(\mathcal{F})=-1$.
\end{definition}

Notice that for any labeling $\alpha : 2^{<n} \rightarrow X$, we have \[\LTD(\chi_\alpha(\mathcal{F})) \leq \TD(\chi_\alpha(\mathcal{F})) \leq \LD(\mathcal{F}),\] so Theorem~\ref{thm:ltd} yields Bhaskar's so-called ``thicket'' version of the Sauer--Shelah Lemma \cite[Theorem 4.3]{bhaskar:thicketdensity} as a corollary.

\begin{cor}\label{cor:stable ss}
Given a labeling $\alpha : 2^{<n} \rightarrow X$, if $\LD(\mathcal{F}) \leq d$, then 
\begin{equation}\label{eq:stable ss}
|\chi_\alpha(\mathcal{F})| \leq \binom{n}{0} + \binom{n}{1} + \cdots + \binom{n}{d}.
\end{equation}
\end{cor}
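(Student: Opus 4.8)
The plan is to deduce the corollary directly from Theorem~\ref{thm:ltd} by setting $B = \chi_\alpha(\mathcal{F}) \subseteq 2^n$ and verifying the chain of inequalities
\[
\LTD(B) \;\leq\; \TD(B) \;\leq\; \LD(\mathcal{F})
\]
flagged just before the statement. Once this chain is in hand, the hypothesis $\LD(\mathcal{F}) \leq d$ gives $\LTD(B) \leq d$, and Theorem~\ref{thm:ltd} applied to the leaf set $B$ yields $|\chi_\alpha(\mathcal{F})| = |B| \leq \binom{n}{0} + \cdots + \binom{n}{d}$, which is exactly~(\ref{eq:stable ss}). The left inequality $\LTD(B) \leq \TD(B)$ needs no work: it holds for every leaf set, since embedding $\widetilde{T}_d$ in the richer language with $\wedge$ and $\sim$ is more demanding than embedding $T_d$ in the language with only $\prec$.

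The real content is the right inequality $\TD(\chi_\alpha(\mathcal{F})) \leq \LD(\mathcal{F})$, and this is where I expect the main obstacle. Writing $e = \TD(B)$, I would start from an order-embedding $\iota : T_e \hookrightarrow (\check{B}, \prec)$ and use it to manufacture a labeling $\beta : 2^{<e} \to X$ witnessing $\LD(\mathcal{F}) \geq e$. For each internal node $u \in 2^{<e}$, the images $\iota(u^{\frown}0)$ and $\iota(u^{\frown}1)$ are incomparable, so their meet $s_u = \iota(u^{\frown}0) \wedge \iota(u^{\frown}1)$ is a genuine splitting node of the branch diagram; I would set $\beta(u) = \alpha(s_u)$, reading off the element of $X$ that the original labeling already attached to that split.

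The delicate point is that an embedding in the language $\{\prec\}$ records only the shape of the branching, not which side is the ``$1$'' side, whereas $\chi_\beta$ reads membership of $\beta(u)$ as a directed $0/1$ decision. I would handle this by first normalizing $\iota$: since an automorphism of $T_e$ is free to transpose the two subtrees below any node, I may precompose $\iota$ with such an automorphism so that, for every $u$, the image $\iota(u^{\frown}1)$ lies on the $s_u^{\frown}1$ side of the split. With the orientation aligned, each abstract leaf $w \in 2^e$ maps to a node $\iota(w)$ lying below some branch $\chi_\alpha(F_w)$ with $F_w \in \mathcal{F}$, because $\iota(w)$ appears in a branch of $B = \chi_\alpha(\mathcal{F})$. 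Chasing the membership decisions of $F_w$ along the path to $w$ then shows, level by level, that $\chi_\beta(F_w) = w$. Hence $\chi_\beta(\mathcal{F}) = 2^e$, so $\LD(\mathcal{F}) \geq e = \TD(\chi_\alpha(\mathcal{F}))$, which closes the chain and completes the proof.
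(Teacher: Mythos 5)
Your proposal is correct and follows the same route as the paper: the paper's entire proof is the remark preceding the corollary, namely that $\LTD(\chi_\alpha(\mathcal{F})) \leq \TD(\chi_\alpha(\mathcal{F})) \leq \LD(\mathcal{F})$ together with an application of Theorem~\ref{thm:ltd}. You simply supply the details of the second inequality (reading off a labeling $\beta$ from the splitting nodes of an embedded $T_e$ after reorienting by an automorphism), which the paper leaves as an observation for the reader.
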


We can also apply Theorem~\ref{thm:ltd} to banned binary sequence problems (BBSP). Due to the technical nature of the definition of a hereditary BBSP we will not restate it here. Rather, we refer the interested reader to \cite[Definitions 3.1 and 3.6]{chase:bbsp}. Notice that if $B\subseteq 2^n$ is a set of solutions to a hereditary $(d+1)$-fold BBSP of length $n$, then $\LTD(B)\leq d$, so Theorem~\ref{thm:ltd} yields \cite[Theorem 3.7]{chase:bbsp} as a corollary. 

\begin{cor}
A hereditary $(d+1)$-fold BBSP of height $n$ has at most
\[\binom{n}{0} + \binom{n}{1} +\cdots + \binom{n}{d}\]
solutions. 
\end{cor}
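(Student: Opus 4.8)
The plan is to show that the
theorem about leveled tree dimension (Theorem~\ref{thm:ltd}) directly
subsumes this corollary once we connect solution sets of hereditary
BBSPs with leaf sets of bounded leveled tree dimension. The excerpt
essentially hands us the bridge: if $B \subseteq 2^n$ is the solution
set of a hereditary $(d+1)$-fold BBSP of height $n$, then
$\LTD(B) \leq d$. So the entire content of the proof is to verify this
single inequality, after which the bound
\[
|B| \leq \binom{n}{0} + \binom{n}{1} + \cdots + \binom{n}{d}
\]
is immediate from Theorem~\ref{thm:ltd}.

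First I would unpack the definition of a hereditary $(d+1)$-fold BBSP
from \cite[Definitions 3.1 and 3.6]{chase:bbsp} and identify the
combinatorial structure it forbids. The key point to extract is that
the ``$(d+1)$-fold'' and ``hereditary'' conditions together rule out
precisely the configuration needed to embed the leveled binary tree
$\widetilde{T}_{d+1}$ into $(\check B, \prec, \wedge, \sim)$. In other
words, a hereditary $(d+1)$-fold BBSP is banning the existence of a
full leveled binary tree of height $d+1$ among the branches of its
solution set. I would make this correspondence explicit by showing
that any embedding of $\widetilde{T}_{d+1}$ would furnish exactly the
banned sequence pattern whose absence defines the problem, so no such
embedding can exist.

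Having established that $\widetilde{T}_{d+1}$ does not embed in
$(\check B, \prec, \wedge, \sim)$, the definition of leveled tree
dimension gives $\LTD(B) \leq d$ directly, since $\LTD(B)$ is the
largest $d'$ for which $\widetilde{T}_{d'}$ embeds. Applying
Theorem~\ref{thm:ltd} with this bound then yields the claimed estimate
on $|B|$, which completes the argument.

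I expect the main obstacle to be the bookkeeping in matching the
formal definition of a hereditary $(d+1)$-fold BBSP against the
model-theoretic embedding condition for $\widetilde{T}_{d+1}$; in
particular, one must check that the ``level'' relation $\sim$ and the
meet function $\wedge$ are honestly respected by the configuration the
BBSP bans, rather than merely the ancestor relation $\prec$. This is
exactly the subtlety that distinguishes $\LTD$ from $\TD$, so care is
needed to confirm that the hereditary banning condition is strong
enough to control the leveled structure and not just the bare tree
order. Once that verification is in place, the rest of the proof is a
one-line invocation of Theorem~\ref{thm:ltd}.
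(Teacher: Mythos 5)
Your proposal takes exactly the paper's route: the paper likewise observes that the solution set $B\subseteq 2^n$ of a hereditary $(d+1)$-fold BBSP satisfies $\LTD(B)\leq d$ and then invokes Theorem~\ref{thm:ltd} for the bound. The only difference is one of emphasis --- the paper asserts $\LTD(B)\leq d$ as a ``Notice that,'' deferring the BBSP definitions entirely to the cited reference, whereas you propose to carry out that verification explicitly; the reduction and the appeal to Theorem~\ref{thm:ltd} are identical.
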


\subsection{Maximal Leaf Sets}

We say $B\subseteq 2^n$ is a \emph{maximal leaf set} with tree dimension $d$, and write $\TD(B) = d^+$, when $\TD(B) = d$ but $\TD(B\cup \{b\})> d$ for all $b\in 2^n\setminus B$. We follow the same convention for leveled tree dimension.

\begin{definition}
Given a node $a\in 2^{\leq n}$, we define its \emph{norm} by
\[ \|a\| = \sum_{i<|a|} a(i). \]
\end{definition}

\begin{definition}
If $A$ is a nonempty set of nodes, we define its \emph{norm} by
\[\|A\| = \max_{a\in A} \|a\|.\] 
If $A$ is empty, we set $\|A\| =-1$. 
\end{definition}

Notice that for any leaf $b$, we have $\|\check{b}\| = \|b\|$. Similarly, if $B$ is a set of leaves, then $\|\check{B}\| = \|B\|$.

It is easy to see that the upper bound provided by (\ref{eq:thm ltd}) is tight since $\LTD(B) = d$ when $B = \{b\in 2^n: \|b\|\leq d\}$. Based on this, one might conjecture that (\ref{eq:thm ltd}) becomes an equality for maximal leaf sets. This is true when $n\leq 5$; however, Figure~\ref{fig:max set exception} illustrates a set of leaves $B \subseteq 2^{6}$ with $\LTD(B)= 2^+$ but
\[|B| \ = \ 21 \ < \ 22 \ = \ \binom{6}{0} + \binom{6}{1} + \binom{6}{2}.\]
Although the conjecture fails for leveled tree dimension, we will show that it holds for tree dimension (see Corollary~\ref{cor:isotp max td}). Thus, for the leaf set $B$ pictured in Figure~\ref{fig:max set exception}, we must have $\LTD(B) < \TD(B)$. Indeed, if we look at the shaded portion of the diagram, it becomes clear that $\TD(B) \geq 4$.

\begin{figure}[h]
\centering
\begin{tikzpicture}[scale=0.1]


    \filldraw[shadecolor] (-61,60) circle (20pt);
    \draw[line width=1.2mm, shadecolor] (-61,60) -- (-60,50);
    \filldraw[shadecolor] (-59,60) circle (20pt);
    \draw[line width=1.2mm, shadecolor] (-59, 60) -- (-60,50);
    
    \filldraw[shadecolor] (-53, 60) circle (20pt);
    \draw[line width=1.2mm, shadecolor] (-53, 60) -- (-52,50);
    \filldraw[shadecolor] (-51,60) circle (20pt);
    \draw[line width=1.2mm, shadecolor] (-51, 60) -- (-52,50);

    \filldraw[shadecolor] (-45, 60) circle (20pt);
    \draw[line width=1.2mm, shadecolor] (-45, 60) -- (-44,50);
    \filldraw[shadecolor] (-43,60) circle (20pt);
    \draw[line width=1.2mm, shadecolor] (-43, 60) -- (-44,50);



    \filldraw[shadecolor] (7, 60) circle (20pt);
    \draw[line width=1.2mm, shadecolor] (7, 60) -- (8,50);
    \filldraw[shadecolor] (9,60) circle (20pt);
    \draw[line width=1.2mm, shadecolor] (9, 60) -- (8,50);


   



    
    \filldraw[shadecolor] (-60, 50) circle (20pt);
    \draw[line width=1.2mm, shadecolor] (-60, 50) -- (-60,40);
    \filldraw[shadecolor] (-52,50) circle (20pt);
    \draw[line width=1.2mm, shadecolor] (-52, 50) -- (-52,40);
    
    \filldraw[shadecolor] (-44, 50) circle (20pt);
    \draw[line width=1.2mm, shadecolor] (-44, 50) -- (-44,40);
    \filldraw[shadecolor] (-38,50) circle (20pt);
    \draw[line width=1.2mm, shadecolor] (-38, 50) -- (-36,40);
    \filldraw[shadecolor] (-34,50) circle (20pt);
    \draw[line width=1.2mm, shadecolor] (-34, 50) -- (-36,40);
    
    
    \filldraw[shadecolor] (8, 50) circle (20pt);
    \draw[line width=1.2mm, shadecolor] (8, 50) -- (8,40);
    
    
    
    
    
    \filldraw[shadecolor] (54, 50) circle (20pt);
    \draw[line width=1.2mm, shadecolor] (54, 50) -- (56,40);
    \filldraw[shadecolor] (58,50) circle (20pt);
    \draw[line width=1.2mm, shadecolor] (58, 50) -- (56,40);
    
     \filldraw[shadecolor] (-60, 40) circle (20pt);
    \draw[line width=1.2mm, shadecolor] (-60, 40) -- (-56,30);
    \filldraw[shadecolor] (-52,40) circle (20pt);
    \draw[line width=1.2mm, shadecolor] (-52, 40) -- (-56,30);
    
    \filldraw[shadecolor] (-44, 40) circle (20pt);
    \draw[line width=1.2mm, shadecolor] (-44, 40) -- (-40,30);
    \filldraw[shadecolor] (-36,40) circle (20pt);
    \draw[line width=1.2mm, shadecolor] (-36, 40) -- (-40,30);
    
    
    \filldraw[shadecolor] (8,40) circle (20pt);
    \draw[line width=1.2mm, shadecolor] (8, 40) -- (8,30);
    
    \filldraw[shadecolor] (20, 40) circle (20pt);
    \draw[line width=1.2mm, shadecolor] (20, 40) -- (24,30);
    \filldraw[shadecolor] (28,40) circle (20pt);
    \draw[line width=1.2mm, shadecolor] (28, 40) -- (24,30);
    
    \filldraw[shadecolor] (36, 40) circle (20pt);
    \draw[line width=1.2mm, shadecolor] (36, 40) -- (40,30);
    \filldraw[shadecolor] (44,40) circle (20pt);
    \draw[line width=1.2mm, shadecolor] (44, 40) -- (40,30);
    
    \filldraw[shadecolor] (56,40) circle (20pt);
    \draw[line width=1.2mm, shadecolor] (56, 40) -- (56,30);
    
    
    \filldraw[shadecolor] (-56, 30) circle (20pt);
    \draw[line width=1.2mm, shadecolor] (-56, 30) -- (-48,20);
    \filldraw[shadecolor] (-40,30) circle (20pt);
    \draw[line width=1.2mm, shadecolor] (-40, 30) -- (-48,20);
     
    
    \filldraw[shadecolor] (8, 30) circle (20pt);
    \draw[line width=1.2mm, shadecolor] (8, 30) -- (16,20);
    \filldraw[shadecolor] (24,30) circle (20pt);
    \draw[line width=1.2mm, shadecolor] (24, 30) -- (16,20);
    
    \filldraw[shadecolor] (40, 30) circle (20pt);
    \draw[line width=1.2mm, shadecolor] (40, 30) -- (48,20);
    \filldraw[shadecolor] (56,30) circle (20pt);
    \draw[line width=1.2mm, shadecolor] (56, 30) -- (48,20);
    
    
    \filldraw[shadecolor] (-48, 20) circle (20pt);
    \draw[line width=1.2mm, shadecolor] (-48, 20) -- (-32,10);
    
    \filldraw[shadecolor] (16, 20) circle (20pt);
    \draw[line width=1.2mm, shadecolor] (16, 20) -- (32,10);
    \filldraw[shadecolor] (48,20) circle (20pt);
    \draw[line width=1.2mm, shadecolor] (48, 20) -- (32,10);
    
    \filldraw[shadecolor] (-32, 10) circle (20pt);
    \draw[line width=1.2mm, shadecolor] (-32, 10) -- (0,0);
    \filldraw[shadecolor] (32,10) circle (20pt);
    \draw[line width=1.2mm, shadecolor] (32, 10) -- (0,0);
    \filldraw[shadecolor] (0,0) circle (20pt);


\filldraw[black] (-61,60) circle (10pt);
    \draw[line width = 0.20mm] (-61,60) -- (-60,50);
    \filldraw[black] (-59,60) circle (10pt);
    \draw[line width = 0.20mm] (-59, 60) -- (-60,50);

    \filldraw[black] (-53, 60) circle (10pt);
    \draw[line width = 0.20mm] (-53, 60) -- (-52,50);
    \filldraw[black] (-51,60) circle (10pt);
    \draw[line width = 0.20mm] (-51, 60) -- (-52,50);

    \filldraw[black] (-45, 60) circle (10pt);
    \draw[line width = 0.20mm] (-45, 60) -- (-44,50);
    \filldraw[black] (-43,60) circle (10pt);
    \draw[line width = 0.20mm] (-43, 60) -- (-44,50);

    \filldraw[black] (-38, 60) circle (10pt);
    \draw[line width = 0.20mm] (-38, 60) -- (-38,50);
    \filldraw[black] (-34,60) circle (10pt);
    \draw[line width = 0.20mm] (-34, 60) -- (-34,50);

     \filldraw[black] (-17,60) circle (10pt);
    \draw[line width = 0.20mm] (-17,60) -- (-16,50);
    \filldraw[black] (-15,60) circle (10pt);
    \draw[line width = 0.20mm] (-15, 60) -- (-16,50);

    \filldraw[black] (7, 60) circle (10pt);
    \draw[line width = 0.20mm] (7, 60) -- (8,50);
    \filldraw[black] (9,60) circle (10pt);
    \draw[line width = 0.20mm] (9, 60) -- (8,50);

    \filldraw[black] (18, 60) circle (10pt);
    \draw[line width = 0.20mm] (18, 60) -- (18,50);
    \filldraw[black] (22,60) circle (10pt);
    \draw[line width = 0.20mm] (22, 60) -- (22,50);

    \filldraw[black] (28, 60) circle (10pt);
    \draw[line width = 0.20mm] (28, 60) -- (28,50);
   
    \filldraw[black] (7, 60) circle (10pt);
    \draw[line width = 0.20mm] (7, 60) -- (8,50);
    \filldraw[black] (9,60) circle (10pt);
    \draw[line width = 0.20mm] (9, 60) -- (8,50);

    \filldraw[black] (34, 60) circle (10pt);
    \draw[line width = 0.20mm] (34, 60) -- (34,50);
    \filldraw[black] (38,60) circle (10pt);
    \draw[line width = 0.20mm] (38, 60) -- (38,50);

    \filldraw[black] (42, 60) circle (10pt);
    \draw[line width = 0.20mm] (42, 60) -- (42,50);
    \filldraw[black] (46,60) circle (10pt);
    \draw[line width = 0.20mm] (46, 60) -- (46,50);

    \filldraw[black] (54, 60) circle (10pt);
    \draw[line width = 0.20mm] (54, 60) -- (54,50);
    \filldraw[black] (58,60) circle (10pt);
    \draw[line width = 0.20mm] (58, 60) -- (58,50);
    
    \filldraw[black] (-60, 50) circle (10pt);
    \draw[line width = 0.20mm] (-60, 50) -- (-60,40);
    \filldraw[black] (-52,50) circle (10pt);
    \draw[line width = 0.20mm] (-52, 50) -- (-52,40);
    
    \filldraw[black] (-44, 50) circle (10pt);
    \draw[line width = 0.20mm] (-44, 50) -- (-44,40);
    \filldraw[black] (-38,50) circle (10pt);
    \draw[line width = 0.20mm] (-38, 50) -- (-36,40);
    \filldraw[black] (-34,50) circle (10pt);
    \draw[line width = 0.20mm] (-34, 50) -- (-36,40);
    
    \filldraw[black] (-16, 50) circle (10pt);
    \draw[line width = 0.20mm] (-16, 50) -- (-16,40);
    
    \filldraw[black] (8, 50) circle (10pt);
    \draw[line width = 0.20mm] (8, 50) -- (8,40);
    
    \filldraw[black] (18, 50) circle (10pt);
    \draw[line width = 0.20mm] (18, 50) -- (20,40);
    \filldraw[black] (22,50) circle (10pt);
    \draw[line width = 0.20mm] (22, 50) -- (20,40);
    
    \filldraw[black] (28, 50) circle (10pt);
    \draw[line width = 0.20mm] (28, 50) -- (28,40);
    
    \filldraw[black] (34, 50) circle (10pt);
    \draw[line width = 0.20mm] (34, 50) -- (36,40);
    \filldraw[black] (38,50) circle (10pt);
    \draw[line width = 0.20mm] (38, 50) -- (36,40);
    
    \filldraw[black] (42, 50) circle (10pt);
    \draw[line width = 0.20mm] (42, 50) -- (44,40);
    \filldraw[black] (46,50) circle (10pt);
    \draw[line width = 0.20mm] (46, 50) -- (44,40);
    
    \filldraw[black] (54, 50) circle (10pt);
    \draw[line width = 0.20mm] (54, 50) -- (56,40);
    \filldraw[black] (58,50) circle (10pt);
    \draw[line width = 0.20mm] (58, 50) -- (56,40);
    
     \filldraw[black] (-60, 40) circle (10pt);
    \draw[line width = 0.20mm] (-60, 40) -- (-56,30);
    \filldraw[black] (-52,40) circle (10pt);
    \draw[line width = 0.20mm] (-52, 40) -- (-56,30);
    
    \filldraw[black] (-44, 40) circle (10pt);
    \draw[line width = 0.20mm] (-44, 40) -- (-40,30);
    \filldraw[black] (-36,40) circle (10pt);
    \draw[line width = 0.20mm] (-36, 40) -- (-40,30);
    
    \filldraw[black] (-16,40) circle (10pt);
    \draw[line width = 0.20mm] (-16, 40) -- (-16,30);
    
    \filldraw[black] (8,40) circle (10pt);
    \draw[line width = 0.20mm] (8, 40) -- (8,30);
    
    \filldraw[black] (20, 40) circle (10pt);
    \draw[line width = 0.20mm] (20, 40) -- (24,30);
    \filldraw[black] (28,40) circle (10pt);
    \draw[line width = 0.20mm] (28, 40) -- (24,30);
    
    \filldraw[black] (36, 40) circle (10pt);
    \draw[line width = 0.20mm] (36, 40) -- (40,30);
    \filldraw[black] (44,40) circle (10pt);
    \draw[line width = 0.20mm] (44, 40) -- (40,30);
    
    \filldraw[black] (56,40) circle (10pt);
    \draw[line width = 0.20mm] (56, 40) -- (56,30);
    
    \filldraw[black] (-56, 30) circle (10pt);
    \draw[line width = 0.20mm] (-56, 30) -- (-48,20);
    \filldraw[black] (-40,30) circle (10pt);
    \draw[line width = 0.20mm] (-40, 30) -- (-48,20);
     
    \filldraw[black] (-16,30) circle (10pt);
    \draw[line width = 0.20mm] (-16, 30) -- (-16,20);
    
    \filldraw[black] (8, 30) circle (10pt);
    \draw[line width = 0.20mm] (8, 30) -- (16,20);
    \filldraw[black] (24,30) circle (10pt);
    \draw[line width = 0.20mm] (24, 30) -- (16,20);
    
    \filldraw[black] (40, 30) circle (10pt);
    \draw[line width = 0.20mm] (40, 30) -- (48,20);
    \filldraw[black] (56,30) circle (10pt);
    \draw[line width = 0.20mm] (56, 30) -- (48,20);
    
    \filldraw[black] (-48, 20) circle (10pt);
    \draw[line width = 0.20mm] (-48, 20) -- (-32,10);
    \filldraw[black] (-16,20) circle (10pt);
    \draw[line width = 0.20mm] (-16, 20) -- (-32,10);
    
    \filldraw[black] (16, 20) circle (10pt);
    \draw[line width = 0.20mm] (16, 20) -- (32,10);
    \filldraw[black] (48,20) circle (10pt);
    \draw[line width = 0.20mm] (48, 20) -- (32,10);
    
    \filldraw[black] (-32, 10) circle (10pt);
    \draw[line width = 0.20mm] (-32, 10) -- (0,0);
    \filldraw[black] (32,10) circle (10pt);
    \draw[line width = 0.20mm] (32, 10) -- (0,0);
    \filldraw[black] (0,0) circle (10pt);

\end{tikzpicture}
\caption{The branch diagram of a leaf set $B \subseteq 2^6$ with $\LTD(B) = 2^+$ and $\TD(B) = 4$.}
\label{fig:max set exception}
\end{figure}

\begin{definition}For each $a \in 2^{< n}$, let $\sigma_a: T_n\rightarrow T_n $ be the automorphism mapping 
\[a ^\frown 0^\frown b \ \mapsto \ a^\frown 1 ^\frown b \quad \qquad \text{ and } \quad \qquad a ^\frown 1^\frown b \ \mapsto \ a^\frown 0 ^\frown b\]
for all sequences $b$ of length at most $n-|a|-1$, leaving all other nodes fixed. 
\end{definition}

\begin{definition}
Given a node $a$ and a set of nodes $A$, we use $A\dhr_a$ to denote the restriction of $A$ to nodes with initial segment $a$. 
\end{definition}

\begin{definition}
Given a set of leaves $B\subseteq 2^n$, we define its \emph{normalization} $\ddot{B}$ using the following inductive procedure: Let $N=2^{n-1}+2^{n-2} +\cdots + 2^0$, and let $(a_i:i<N)$ enumerate the elements of $2^{<n}$ in such a way that if $a_i$ is longer than $a_j$, then $i < j.$
Let $B_0 = B.$ For each $i<N$, once $B_i$ has been defined, let 
\[B_{i+1} = 
    \begin{cases}
        B_{i} & \text{ if } \|B_{i}\dhr_{a_i}\| \leq \|\sigma_{a_{i}}(B_{i}\dhr_{a_i})\|, \\
        \sigma_{a_{i}}(B_{i}) & \text{ otherwise.}
    \end{cases}
\]
Let $\ddot{B}=B_N$.
\end{definition}

Notice that for any leaf set $B$, we have $(\check{B}, \prec) \cong (\check{\ddot{B}}, \prec)$.

\begin{theorem}
\label{lem:stabless}
If $B \subseteq 2^n$ is a set of leaves, then $\TD(B) = \|\ddot{B}\|$. 
\end{theorem}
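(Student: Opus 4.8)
The plan is to reduce to a normalized leaf set and then compare tree dimension with norm level by level through the tree. Since $\ddot{B}$ is obtained from $B$ by composing the tree automorphisms $\sigma_{a_i}$, and the excerpt already records that $(\check{B},\prec)\cong(\check{\ddot{B}},\prec)$, we have $\TD(B)=\TD(\ddot{B})$; as $\|\ddot{B}\|$ is read off $\ddot{B}$ directly, it suffices to prove $\TD(C)=\|C\|$ for $C=\ddot{B}$ (the empty case is immediate, since $\ddot{\emptyset}=\emptyset$ and $\TD=\|\cdot\|=-1$). Writing $\varepsilon$ for the root, I will attach to each node $a$ two quantities: the \emph{suffix norm} $h(a)=\max\{\|b\|-\|a\|:b\in C,\ a\preceq b\}$, read as $-\infty$ when no leaf of $C$ extends $a$, and the local tree dimension $g(a)=\TD(C\dhr_a)$, also read as $-\infty$ on the empty subtree. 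Then $h(\varepsilon)=\|C\|$ and $g(\varepsilon)=\TD(C)$, so the goal becomes $g(\varepsilon)=h(\varepsilon)$.

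The first key step is to show that $C=\ddot{B}$ is \emph{normal}, meaning $h(a0)\ge h(a1)$ for every $a\in 2^{<n}$. A short computation shows that the test defining $B_{i+1}$ fails — i.e. the procedure swaps at $a_i$ — exactly when $h(a_i0)<h(a_i1)$ in the current set, and that after the swap the reverse strict inequality holds; so processing $a_i$ establishes $h(a_i0)\ge h(a_i1)$. The crucial point is that this condition, once established at a node, is never destroyed. I will prove the invariant $(I_\ell)$: after all nodes of length $\ge\ell$ have been processed, $h(v0)\ge h(v1)$ holds at every $v$ with $|v|\ge\ell$. Because nodes are enumerated longest first, every later swap $\sigma_{a_j}$ uses a strictly shorter $a_j$; such a swap flips the single bit in position $|a_j|<|v|$ and therefore relocates the whole subtree below $v$ to the bit-$|a_j|$-flipped node $v'$ without altering any bit in position $\ge|v|$. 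Hence $h(v'0),h(v'1)$ in the new set equal $h(v0),h(v1)$ in the old one, so the established inequality is simply transported from $v$ to $v'$ (and the one formerly at $v'$ arrives at $v$), while incomparable same-length swaps miss the subtree entirely. Downward induction on $\ell$ then yields $(I_0)$, which is normality of $C$.

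The second key step is the level-by-level comparison, proved by induction on $n-|a|$ from the leaves upward, showing $g(a)=h(a)$ for every node $a$. At a leaf both equal $0$ (if the leaf lies in $C$) or $-\infty$. For the inductive step I will use two recursions: the elementary $h(a)=\max\big(h(a0),\,1+h(a1)\big)$, and the structural identity $g(a)=\max\big(g(a0),\,g(a1),\,1+\min(g(a0),g(a1))\big)$. The latter rests on the observation that $T_d$ embeds in a rooted branch-subtree precisely when either $T_d$ already embeds in one child-subtree, or there is a branching node below which \emph{both} child-subtrees admit $T_{d-1}$. Feeding the inductive hypotheses $g(a0)=h(a0)$, $g(a1)=h(a1)$ together with normality $h(a0)\ge h(a1)$ into the $g$-recursion collapses it to $g(a)=\max\big(g(a0),\,1+g(a1)\big)=\max\big(h(a0),\,1+h(a1)\big)=h(a)$. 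Evaluating at $a=\varepsilon$ gives $\TD(C)=g(\varepsilon)=h(\varepsilon)=\|C\|$, which finishes the proof.

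I expect the main obstacle to be the persistence of normality in the second step: one must be certain that the greedy single pass, rather than some iterated fixed-point process, already produces a \emph{globally} normal set. The crux is the geometric fact that an automorphism based at a short node acts on the subtree below any longer node as a rigid relocation preserving all deeper suffix norms, so that local optimality established at the longer nodes survives every subsequent shorter-node swap; making this transport of the inequality precise, together with the disjointness of incomparable same-length swaps, is where the care is needed. The structural $g$-recursion is the remaining step requiring justification, but it is a routine consequence of the definition of an embedding of $T_d$.
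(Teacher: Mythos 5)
Your proof is correct, and at its core it rests on the same two pillars as the paper's: the easy upper bound ($T_{d+1}$ cannot embed among nodes of norm at most $d$, here absorbed into your equality $g=h$) and the fact that the normalization procedure produces a set in which, below every internal node, the $0$-subtree is at least as deep in suffix norm as the $1$-subtree. The organization differs, though. The paper proves the lower bound by induction on the dimension $d$: given $b\in\ddot{B}$ with $\|b\|\ge d+1$, it takes the longest all-zeros prefix $a$ of $b$ and asserts that $T_d$ embeds below both $a^\frown 0$ and $a^\frown 1$, leaving both the normality of $\ddot{B}$ and its persistence under later swaps implicit. You instead prove the exact node-by-node identity $g(a)=h(a)$ by induction on height, using the recursion $g(a)=\max\bigl(g(a0),g(a1),1+\min(g(a0),g(a1))\bigr)$ together with normality to collapse it to $h(a)=\max\bigl(h(a0),1+h(a1)\bigr)$, and you make the persistence of normality an explicit invariant (longest nodes processed first, so each later swap at a shorter node acts as a rigid relocation of deeper subtrees preserving all suffix norms). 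What your route buys is precisely the two details the paper glosses over --- why a single greedy pass already yields a globally normal set, and why the claimed simultaneous embeddings below $a^\frown 0$ and $a^\frown 1$ actually exist --- at the cost of a longer argument; the paper's induction on $d$ is shorter but leaves the reader to reconstruct essentially the invariant you prove.
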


\begin{proof}
Clearly, we have $\TD(B) \leq \|\ddot{B}\|$ since $T_{\|\ddot{B}\|+1}$ cannot embed in \[
(\{a\in 2^{\leq n}: \|a\|\leq \|\ddot{B}\|\},\prec).
\] Thus, it is enough to show that  $\TD(B)\geq d$ whenever $\|\ddot{B}\|\geq d$. This is obviously true when $d\leq 0$. 
Assume it holds for some $d\geq 0$, and suppose we have $b\in \ddot{B}$ with $\|b\| \geq d+1$. Let $a$ be the largest initial segment of $b$ such that $\|a\|=0$. By our assumption, it follows that $T_d$ embeds in $(\check{\ddot{B}}\dhr_{a^\frown 0},\prec)$ and $(\check{\ddot{B}}\dhr_{a^\frown 1}, \prec)$.
\end{proof}

\begin{cor}\label{cor:isotp max td}
Given a set of leaves $B\subseteq 2^n$, if $\TD(B) = d^+$, then \[(\check{B},\prec) \cong (\{a\in 2^{\leq n} : \|a\| \leq d\},\prec).\]
\end{cor}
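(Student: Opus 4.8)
The plan is to reduce the statement to a single set equality and then play off Theorem~\ref{lem:stabless} against maximality. Write $C_d = \{a \in 2^{\le n} : \|a\| \le d\}$ for the target poset and $B_d = \{b \in 2^n : \|b\| \le d\}$ for the leaves of norm at most $d$. First I would record the identity $\check{B_d} = C_d$: since $\|a\| \le \|b\|$ whenever $a \preceq b$, every branch of a leaf of norm at most $d$ stays inside $C_d$; conversely, every node of norm at most $d$ extends to a length-$n$ leaf of the same norm by appending zeros, so $C_d \subseteq \check{B_d}$. Because normalization satisfies $(\check{B},\prec) \cong (\check{\ddot{B}},\prec)$, it then suffices to prove $\ddot{B} = B_d$; the desired isomorphism follows by composing with $\check{B_d} = C_d$.

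One inclusion is immediate. By Theorem~\ref{lem:stabless} we have $\|\ddot{B}\| = \TD(B) = d$, so every leaf of $\ddot{B}$ has norm at most $d$, i.e.\ $\ddot{B} \subseteq B_d$.

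For the reverse inclusion the plan is to use maximality, and the crucial observation is that normalization is realized by a genuine automorphism of $T_n$. Indeed, each step replaces $B_i$ by either $B_i$ or $\sigma_{a_i}(B_i)$, so $\ddot{B} = \tau(B)$, where $\tau$ is the composition of those $\sigma_{a_i}$ that were actually applied; as a composition of automorphisms of $T_n$, the map $\tau$ is itself an automorphism of $(2^{\le n},\prec)$ that restricts to a bijection of $2^n$ and satisfies $\tau(\check{B}) = \check{\tau(B)}$. Automorphisms preserve both tree dimension and the maximality condition, so $\ddot{B}$ is again a maximal leaf set with $\TD(\ddot{B}) = d$. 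Now suppose toward a contradiction that some $b^* \in B_d \setminus \ddot{B}$ exists. Maximality gives $\TD(\ddot{B} \cup \{b^*\}) > d$. But $\ddot{B} \cup \{b^*\} \subseteq B_d$, so $\check{(\ddot{B} \cup \{b^*\})} \subseteq C_d$, and, exactly as in the opening line of the proof of Theorem~\ref{lem:stabless}, the tree $T_{d+1}$ cannot embed in $(C_d,\prec)$; hence $\TD(\ddot{B} \cup \{b^*\}) \le d$, a contradiction. Thus $B_d \subseteq \ddot{B}$, giving $\ddot{B} = B_d$ and completing the argument.

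The step I expect to be the main obstacle is the transfer of maximality through normalization: maximality is defined via leaves of $2^n \setminus B$, so it is a property of the embedding of $B$ into the ambient tree rather than of the abstract poset $(\check{B},\prec)$, and the bare statement $(\check{B},\prec) \cong (\check{\ddot{B}},\prec)$ does not obviously carry it over. Recognizing that the normalization procedure is literally a tree automorphism $\tau$ of $T_n$ (not merely a producer of isomorphic branch posets) is what makes the transfer legitimate and lets the short norm-counting contradiction close the proof.
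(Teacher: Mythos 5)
Your proof is correct and follows the route the paper intends: the corollary is stated without proof as a consequence of Theorem~\ref{lem:stabless}, and your argument (normalization is a tree automorphism, so $\ddot{B}$ inherits maximality; $\|\ddot{B}\|=d$ forces $\ddot{B}\subseteq B_d$; and maximality plus the non-embeddability of $T_{d+1}$ in $(C_d,\prec)$ forces equality) is exactly the derivation being left to the reader. Your closing remark correctly identifies the one point that genuinely needs care, namely that maximality transfers because normalization acts by an automorphism of the ambient tree rather than merely producing an isomorphic branch poset.
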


\section{Higher-Arity Trees}
For $m\geq 2$, let 
\begin{itemize}
	\item[] $T_{n,m} = (m^{\leq n}, \prec)$ be the \emph{$m$-ary tree},
	\item[] $\hat{T}_{n,m} = (m^{\leq n}, \prec, \wedge)$ be the \emph{meeted $m$-ary tree}, and
	\item[] $\widetilde{T}_{n,m} = (m^{\leq n}, \prec, \wedge, \sim)$ be the \emph{leveled $m$-ary tree},
\end{itemize}
all of height $n$.
Our nodes are now the $m$-ary sequences of length at most $n$, while the symbols $\prec$, $\wedge$, and $\sim$ are interpreted exactly as defined in Section~\ref{s:binary trees} for binary trees.

\begin{definition}
For $m \geq \ell \geq 2$, if $B\subseteq m^n$ is a nonempty set of leaves, we define its \emph{$\ell$-ary tree dimension}, written $\TD_{\ell}(B)$, to be the largest nonnegative integer $d$ such that $T_{d,\ell}$ embeds in $(\check{B},\prec)$. 
Likewise, we define its \emph{meeted $\ell$-ary tree dimension}, written $\MTD_{\ell}(B)$, to be the largest nonnegative integer $d$ such that $\hat{T}_{d,\ell}$ embeds in $(\check{B},\prec, \wedge)$ and
its \emph{leveled $\ell$-ary tree dimension}, written $\LTD_{\ell}(B)$, to be the largest nonnegative integer $d$ such that $\widetilde{T}_{d,\ell}$ embeds in $(\check{B},\prec, \wedge, \sim)$.
If $B$ is empty, we set 
\[
\TD_{\ell}(B) = \MTD_{\ell}(B) = \LTD_{\ell}(B) = -1. 
\]
\end{definition}

Notice that for any $B\subseteq m^n$, we have
\[
\LTD_{\ell}(B) \leq \MTD_{\ell}(B) \leq \TD_{\ell}(B).
\]
For $\ell \geq 3$, both inequalities may be strict. We give an example of this in Figure \ref{fig:strict ineq ha}. However, for $\ell = 2$, we have $\MTD_2(B) = \TD_2(B)$.
In fact, if $B\subseteq 2^n$, then
\[
\LTD(B) = \LTD_2(B) \leq \MTD_2(B) = \TD_2(B) = \TD(B).
\]

\begin{figure}[h]
\centering
\begin{tikzpicture}[scale=0.09]


    \filldraw[black] (0,0) circle (10pt);

    \filldraw[black] (-45,10) circle (10pt);
    \draw[line width = 0.20mm] (0,0) -- (-45,10);
    \filldraw[black] (0,10) circle (10pt);
    \draw[line width = 0.20mm] (0,0) -- (0,10);
    \filldraw[black] (45,10) circle (10pt);
    \draw[line width = 0.20mm] (0,0) -- (45,10);
    
    \filldraw[black] (-45,20) circle (10pt);
    \draw[line width = 0.20mm] (-45,10) -- (-45,20);

    \filldraw[black] (-15,20) circle (10pt);
    \draw[line width = 0.20mm] (0,10) -- (-15,20);
    \filldraw[black] (0,20) circle (10pt);
    \draw[line width = 0.20mm] (0,10) -- (0,20);
    \filldraw[black] (15,20) circle (10pt);
    \draw[line width = 0.20mm] (0,10) -- (15,20);
    
    \filldraw[black] (45,20) circle (10pt);
    \draw[line width = 0.20mm] (45,10) -- (45,20);

    \filldraw[black] (-60,30) circle (10pt);
    \draw[line width = 0.20mm] (-45,20) -- (-60,30);
    \filldraw[black] (-45,30) circle (10pt);
    \draw[line width = 0.20mm] (-45,20) -- (-45,30);
    \filldraw[black] (-30,30) circle (10pt);
    \draw[line width = 0.20mm] (-45,20) -- (-30,30);

    \filldraw[black] (-15,30) circle (10pt);
    \draw[line width = 0.20mm] (-15,20) -- (-15,30);
    \filldraw[black] (0,30) circle (10pt);
    \draw[line width = 0.20mm] (0,20) -- (0,30);
    \filldraw[black] (15,30) circle (10pt);
    \draw[line width = 0.20mm] (15,20) -- (15,30);

    \filldraw[black] (60,30) circle (10pt);
    \draw[line width = 0.20mm] (45,20) -- (60,30);
    \filldraw[black] (45,30) circle (10pt);
    \draw[line width = 0.20mm] (45,20) -- (45,30);
    \filldraw[black] (30,30) circle (10pt);
    \draw[line width = 0.20mm] (45,20) -- (30,30);
    

    \filldraw[black] (-62.5,40) circle (10pt);
    \draw[line width = 0.20mm] (-60,30) -- (-62.5,40);
    \filldraw[black] (-65,50) circle (10pt);
    \draw[line width = 0.20mm] (-62.5,40) -- (-65,50);
    \filldraw[black] (-57.5,40) circle (10pt);
    \draw[line width = 0.20mm] (-60,30) -- (-57.5,40);
    \filldraw[black] (-60,50) circle (10pt);
    \draw[line width = 0.20mm] (-57.5,40) -- (-60,50);
    \filldraw[black] (-55,50) circle (10pt);
    \draw[line width = 0.20mm] (-57.5,40) -- (-55,50);

    \filldraw[black] (-47.5,40) circle (10pt);
    \draw[line width = 0.20mm] (-45,30) -- (-47.5,40);
    \filldraw[black] (-50,50) circle (10pt);
    \draw[line width = 0.20mm] (-47.5,40) -- (-50,50);
    \filldraw[black] (-42.5,40) circle (10pt);
    \draw[line width = 0.20mm] (-45,30) -- (-42.5,40);
    \filldraw[black] (-45,50) circle (10pt);
    \draw[line width = 0.20mm] (-42.5,40) -- (-45,50);
    \filldraw[black] (-40,50) circle (10pt);
    \draw[line width = 0.20mm] (-42.5,40) -- (-40,50);
    
    \filldraw[black] (-32.5,40) circle (10pt);
    \draw[line width = 0.20mm] (-30,30) -- (-32.5,40);
    \filldraw[black] (-35,50) circle (10pt);
    \draw[line width = 0.20mm] (-32.5,40) -- (-35,50);
    \filldraw[black] (-27.5,40) circle (10pt);
    \draw[line width = 0.20mm] (-30,30) -- (-27.5,40);
    \filldraw[black] (-30,50) circle (10pt);
    \draw[line width = 0.20mm] (-27.5,40) -- (-30,50);
    \filldraw[black] (-25,50) circle (10pt);
    \draw[line width = 0.20mm] (-27.5,40) -- (-25,50);

    \filldraw[black] (-17.5,40) circle (10pt);
    \draw[line width = 0.20mm] (-15,30) -- (-17.5,40);
    \filldraw[black] (-20,50) circle (10pt);
    \draw[line width = 0.20mm] (-17.5,40) -- (-20,50);
    \filldraw[black] (-12.5,40) circle (10pt);
    \draw[line width = 0.20mm] (-15,30) -- (-12.5,40);
    \filldraw[black] (-15,50) circle (10pt);
    \draw[line width = 0.20mm] (-12.5,40) -- (-15,50);
    \filldraw[black] (-10,50) circle (10pt);
    \draw[line width = 0.20mm] (-12.5,40) -- (-10,50);

    \filldraw[black] (-2.5,40) circle (10pt);
    \draw[line width = 0.20mm] (0,30) -- (-2.5,40);
    \filldraw[black] (-5,50) circle (10pt);
    \draw[line width = 0.20mm] (-2.5,40) -- (-5,50);
    \filldraw[black] (2.5,40) circle (10pt);
    \draw[line width = 0.20mm] (0,30) -- (2.5,40);
    \filldraw[black] (0,50) circle (10pt);
    \draw[line width = 0.20mm] (2.5,40) -- (0,50);
    \filldraw[black] (5,50) circle (10pt);
    \draw[line width = 0.20mm] (2.5,40) -- (5,50);

    \filldraw[black] (12.5,40) circle (10pt);
    \draw[line width = 0.20mm] (15,30) -- (12.5,40);
    \filldraw[black] (10,50) circle (10pt);
    \draw[line width = 0.20mm] (12.5,40) -- (10,50);
    \filldraw[black] (17.5,40) circle (10pt);
    \draw[line width = 0.20mm] (15,30) -- (17.5,40);
    \filldraw[black] (15,50) circle (10pt);
    \draw[line width = 0.20mm] (17.5,40) -- (15,50);
    \filldraw[black] (20,50) circle (10pt);
    \draw[line width = 0.20mm] (17.5,40) -- (20,50);

    \filldraw[black] (27.5,40) circle (10pt);
    \draw[line width = 0.20mm] (30,30) -- (27.5,40);
    \filldraw[black] (25,50) circle (10pt);
    \draw[line width = 0.20mm] (27.5,40) -- (25,50);
    \filldraw[black] (32.5,40) circle (10pt);
    \draw[line width = 0.20mm] (30,30) -- (32.5,40);
    \filldraw[black] (30,50) circle (10pt);
    \draw[line width = 0.20mm] (32.5,40) -- (30,50);
    \filldraw[black] (35,50) circle (10pt);
    \draw[line width = 0.20mm] (32.5,40) -- (35,50);

    \filldraw[black] (42.5,40) circle (10pt);
    \draw[line width = 0.20mm] (45,30) -- (42.5,40);
    \filldraw[black] (40,50) circle (10pt);
    \draw[line width = 0.20mm] (42.5,40) -- (40,50);
    \filldraw[black] (47.5,40) circle (10pt);
    \draw[line width = 0.20mm] (45,30) -- (47.5,40);
    \filldraw[black] (45,50) circle (10pt);
    \draw[line width = 0.20mm] (47.5,40) -- (45,50);
    \filldraw[black] (50,50) circle (10pt);
    \draw[line width = 0.20mm] (47.5,40) -- (50,50);

    \filldraw[black] (57.5,40) circle (10pt);
    \draw[line width = 0.20mm] (60,30) -- (57.5,40);
    \filldraw[black] (55,50) circle (10pt);
    \draw[line width = 0.20mm] (57.5,40) -- (55,50);
    \filldraw[black] (62.5,40) circle (10pt);
    \draw[line width = 0.20mm] (60,30) -- (62.5,40);
    \filldraw[black] (60,50) circle (10pt);
    \draw[line width = 0.20mm] (62.5,40) -- (60,50);
    \filldraw[black] (65,50) circle (10pt);
    \draw[line width = 0.20mm] (62.5,40) -- (65,50);

\end{tikzpicture}
\caption{The branch diagram of a leaf set $B \subseteq 3^5$ with $\LTD(B) = 1$, $\MTD(B) = 2$, and $\TD(B) = 3$.}
\label{fig:strict ineq ha}
\end{figure}

\begin{theorem}\label{thm:ltd ha}
Given a set of leaves $B\subseteq m^n$, if $\LTD_{\ell}(B) \leq d$, then 
\begin{equation}\label{eq:thm ltd ha}
|B| \ \leq \ \sum\limits_{i=0}^d \binom{n}{i}(m-\ell+1)^i(\ell-1)^{n-i}.
\end{equation}
\end{theorem}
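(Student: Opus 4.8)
The plan is to adapt the inductive argument behind Theorem~\ref{thm:ltd}, replacing the binary two-way split of a node by a weighted tally of the active children of an $m$-ary node. It is convenient to set $e = \ell-1$ and $c = m-\ell+1$, so that $c+e = m$ and the right-hand side of (\ref{eq:thm ltd ha}) becomes \[ f(n,d) \ := \ \sum_{i=0}^d \binom{n}{i} c^{i} e^{n-i}. \] I would induct on $n$. When $d = -1$ the set $B$ is empty, and when $d \geq n$ the binomial theorem gives $f(n,n) = (c+e)^n = m^n \geq |B|$ while $f(n,d)$ is nondecreasing in $d$; so both cases are immediate. Hence I may assume $0 \leq d \leq n$, that the statement holds at height $n$ for all admissible parameters, and prove it for a given $B \subseteq m^{n+1}$.

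For each $a \in m^n$ write $S_a = \{j < m : a^{\frown}j \in B\}$, so that $|B| = \sum_{a \in m^n} |S_a|$, and set \[ A_1 = \{a \in m^n : a \in \check{B}\} \qquad \text{and} \qquad A_2 = \{a \in m^n : |S_a| \geq \ell\}. \] Since $\check{A_1}$ is a substructure of $\check{B}$, we have $\LTD_{\ell}(A_1) \leq \LTD_{\ell}(B) \leq d$, so the inductive hypothesis gives $|A_1| \leq f(n,d)$. The crux of the proof is the claim that $\LTD_{\ell}(A_2) \leq d-1$, which likewise yields $|A_2| \leq f(n,d-1)$. Granting this, I split each summand as $|S_a| = \min(|S_a|, e) + \max(|S_a|-e, 0)$. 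The first term is at most $e$ and is nonzero only for $a \in A_1$, while the second is at most $c$ and is nonzero only for $a \in A_2$; summing over $a$ gives $|B| \leq e\,|A_1| + c\,|A_2| \leq e\,f(n,d) + c\,f(n,d-1)$. A routine application of Pascal's rule $\binom{n+1}{i} = \binom{n}{i} + \binom{n}{i-1}$, after reindexing the second sum, shows that $e\,f(n,d) + c\,f(n,d-1) = f(n+1,d)$, completing the inductive step.

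The main obstacle is establishing $\LTD_{\ell}(A_2) \leq d-1$, which I would prove contrapositively: from an embedding $\phi$ of $\widetilde{T}_{d,\ell}$ into $(\check{A_2}, \prec, \wedge, \sim)$ I would construct an embedding of $\widetilde{T}_{d+1,\ell}$ into $\check{B}$, contradicting $\LTD_{\ell}(B) \leq d$. Since $A_2 \subseteq m^n$, the images under $\phi$ of the nodes at each depth $i$ occupy a common level $\mu_i$ with $\mu_0 < \cdots < \mu_d \leq n$, and each leaf $w$ of $\widetilde{T}_{d,\ell}$ is sent to a frontier node $u_w = \phi(w)$; these frontier nodes are pairwise incomparable, and each is an initial segment of some $a_w \in A_2$ having at least $\ell$ children in $B$. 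I would retain $\phi$ on every node of length less than $d$, remap each leaf $w$ (now an internal node of $\widetilde{T}_{d+1,\ell}$) to $a_w$ at level $n$, and send the $\ell$ children of $w$ to $\ell$ distinct elements of $S_{a_w}$ at level $n+1$. Checking that this is an embedding is the delicate step: the levels remain strictly increasing because $\mu_{d-1} < \mu_d \leq n$; the relation $\prec$ is preserved because $\phi(p) \prec u_w \preceq a_w$ for the parent $p$ of $w$; and, most importantly, the meets land correctly, since distinct frontier nodes are incomparable and hence satisfy $a_w \wedge a_{w'} = u_w \wedge u_{w'} = \phi(w \wedge w')$, while two children of a single $w$ meet at $a_w$, matching the meet $w$ of their preimages. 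As $\sim$-preservation is built into the leveling, this produces the forbidden embedding and closes the induction.
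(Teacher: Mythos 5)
Your proof is correct and follows essentially the same route as the paper's: the same induction on $n$, the same sets $A_1$ and $A_2$, the same inequality $|B| \leq (\ell-1)|A_1| + (m-\ell+1)|A_2|$, and the same application of Pascal's rule. The only difference is that you supply detailed justifications (the $\min/\max$ splitting of $|S_a|$ and the contrapositive construction showing $\LTD_{\ell}(A_2) \leq d-1$) for steps the paper asserts without proof.
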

\begin{proof}
The theorem clearly holds when $d = -1$ and when $d=n$.

Assume the theorem holds for some $n \geq 0$, and let $B\subseteq m^{n+1}$. Suppose $\LTD_{\ell}(B)\leq d$ with $0\leq d\leq n$. Let $A_1 = \{a\in m^n: a\in \check{B}\}$.
Since $\LTD_{\ell}(A_1)\leq d$, our assumption implies that 
\begin{equation*} \label{eq:thm ltd A_1 ha}
|A_1|\ \leq\ \sum\limits_{i=0}^d \binom{n}{i}(m-\ell+1)^i(\ell-1)^{n-i}.
\end{equation*}
Let $A_2 = \{a\in m^n: |B\dhr_{a}|\geq \ell\}$.
We must have $\LTD_{\ell}(A_2)\leq d-1$, so our assumption implies that 
\begin{align*}\label{eq:thm ltd A_2 ha}
|A_2|\ &\leq\  \sum\limits_{i=0}^{d-1} \binom{n}{i}(m-\ell+1)^i(\ell-1)^{n-i}\\ 
&= \ \sum\limits_{i=1}^{d} \binom{n}{i-1}(m-\ell+1)^{i-1}(\ell-1)^{(n+1)-i}.
\end{align*}
It follows that 
\begin{align*}
|B|&\ \leq \  |A_1|(\ell-1)+|A_2|(m-\ell+1)\\
&\ \leq\  \sum\limits_{i=0}^d \binom{n+1}{i}(m-\ell+1)^i(\ell-1)^{(n+1)-i}
\end{align*}
as desired.
\end{proof}

It is not difficult to see that if $B \subseteq m^n$ is a set of solutions to a hereditary $(d+1)$-fold banned $m$-ary sequence problem of height $n$ (see \cite[Definitions 4.1 and 4.2]{chase:bbsp}), then $\LTD_m(B) \leq d$. Thus, Theorem~\ref{thm:ltd ha} yields \cite[Theorem 4.3]{chase:bbsp} as a corollary.

\begin{cor}
A hereditary $(d+1)$-fold banned $m$-ary sequence problem of length $n$ has at most
\[\sum_{i=0}^d \binom{n}{i} (m-1)^{n-i}\]
solutions.
\end{cor}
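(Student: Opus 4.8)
The plan is to derive this result as a direct specialization of Theorem~\ref{thm:ltd ha} to the case $\ell = m$, with the only real content being the translation of the combinatorial hypothesis into a bound on leveled tree dimension.

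First I would recall, from \cite[Definitions 4.1 and 4.2]{chase:bbsp}, what it means for $B \subseteq m^n$ to be the solution set of a hereditary $(d+1)$-fold banned $m$-ary sequence problem. The structural feature I would isolate is that a full $m$-ary branching repeated over $d+1$ common levels is exactly the configuration the banning condition forbids, while the hereditary hypothesis ensures this obstruction is inherited by the restrictions of $B$ relevant to the argument.

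The main step is then to verify that $\LTD_m(B) \leq d$. An embedding of $\widetilde{T}_{d+1,m}$ into $(\check{B},\prec,\wedge,\sim)$ would produce $m^{d+1}$ branches that agree along a common chain of meet points situated at $d+1$ shared levels, with all $m$ children realized at each such level. I would argue that this is precisely a forbidden $(d+1)$-fold pattern, so no such embedding can exist. This is the step I expect to require the most care, since it depends on matching the level and meet structure preserved by a leveled embedding against the exact formulation of the banned problem in \cite{chase:bbsp}; the roles of $\prec$, $\wedge$, and $\sim$ must all be accounted for to ensure that an abstract leveled-tree embedding genuinely corresponds to a banned branching configuration.

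Finally, I would apply Theorem~\ref{thm:ltd ha} with $\ell = m$. Since $m - \ell + 1 = 1$, the factor $(m-\ell+1)^i$ collapses to $1$ and $(\ell-1)^{n-i} = (m-1)^{n-i}$, so the bound in (\ref{eq:thm ltd ha}) reduces to
\[
|B| \ \leq \ \sum_{i=0}^d \binom{n}{i}(m-1)^{n-i},
\]
which is exactly the claimed count. The bulk of the work is therefore conceptual, residing in the embedding obstruction, while the arithmetic specialization is immediate.
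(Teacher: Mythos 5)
Your proposal matches the paper's argument exactly: the paper likewise observes that a solution set $B$ of a hereditary $(d+1)$-fold banned $m$-ary sequence problem satisfies $\LTD_m(B) \leq d$ and then applies Theorem~\ref{thm:ltd ha} with $\ell = m$, where the bound collapses to $\sum_{i=0}^d \binom{n}{i}(m-1)^{n-i}$. The only difference is that the paper leaves the verification of $\LTD_m(B)\leq d$ as ``not difficult to see,'' whereas you sketch the embedding obstruction explicitly.
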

 
\subsection{Maximal Leaf Sets}

We say $B\subseteq m^n$ is a \emph{maximal leaf set} with $\ell$-ary tree dimension $d$, and write $\TD_\ell(B) = d^+$, when $\TD_\ell(B) = d$ but $\TD_\ell(B\cup \{b\})> d$ for all $b\in m^n\setminus B$. We follow the same convention for the meeted and leveled variants of $\ell$-ary tree dimension.
 
\begin{definition}
Given a node $a\in m^{\leq n}$, we define its \emph{$\ell$-ary norm} by
\[ \|a\|_{\ell} = \sum_{i<|a|} \begin{cases}
    1 & \text{ if } a(i) \geq \ell-1,\\
    0 & \text{ otherwise}.
\end{cases} \]
\end{definition}

Notice that for any $a \in 2^{\leq n}$, we have $\|a\| = \|a\|_2.$

\begin{definition}
If $A$ is a nonempty set of nodes, we define its \emph{$\ell$-ary norm} by
\[\|A\|_{\ell} = \max_{a\in A} \|a\|_{\ell}.\] 
If $A$ is empty, we set $\|A\|_{\ell} =-1$. 
\end{definition}

It is easy to see that the upper bound provided by (\ref{eq:thm ltd ha}) is tight since $\LTD_\ell(B) = d$ when $B = \{b\in m^n: \|b\|_\ell\leq d\}$.
We will show that (\ref{eq:thm ltd ha}) becomes an equality when $\MTD_\ell(B) = d^+$ (see Corollary~\ref{cor:isotp max td ha}).

\begin{definition}For each $a \in m^{< n}$ and $k < m-1$, let $\sigma_{a, k}: T_{n,m}\rightarrow T_{n,m} $ be the automorphism mapping 
\[a ^\frown k^\frown b \ \mapsto \ a^\frown (k+1) ^\frown b \qquad \text{ and } \qquad a ^\frown (k+1)^\frown b \ \mapsto \ a^\frown k ^\frown b\]
for all sequences $b$ of length at most $n-|a|-1$, leaving all other nodes fixed. 
\end{definition}

\begin{definition}
Given a set of leaves $B\subseteq m^n$, we define its \emph{$\ell$-ary normalization} $\ddot{B}$ using the following inductive procedure:  
    \begin{itemize}[leftmargin=16pt, topsep=4pt, itemsep=4pt]
        \item[] Let $N=m^{n-1}+m^{n-2} +\cdots + m^0$, and let $(a_i:i<N)$ enumerate the elements of $m^{<n}$ in such a way that if $a_i$ is longer than $a_j$, then $i < j.$ Let $B_0 = B$.
        \item[] For each $i<N$, once $B_i$ has been defined: 
        \begin{itemize}[leftmargin=16pt, topsep=4pt, itemsep=4pt]
            \item[] Let $B_{i,0} = B_i$.
            \item[] For each $j< m-1$, once $B_{i,j}$ has been defined:
            \begin{itemize}[leftmargin=16pt, topsep=4pt, itemsep=4pt]
                \item[] Let $B_{i,j,0} = B_{i,j}$.
                \item[] For each $k<m-j-1$, once $B_{i,j,k}$ has been defined:
                \begin{itemize}[leftmargin=16pt, topsep=4pt, itemsep=4pt]
                    \item[] If $\|B_{i,j,k}\dhr_{a_i^{\frown}k}\|_{\ell} - \|a_i^{\frown}k\|_{\ell} \geq \|B_{i,j,k}\dhr_{a_i^{\frown}(k+1)}\|_{\ell}-\|a_i^{\frown}(k+1)\|_{\ell}$,
                    \item[] \hspace{24pt}let $B_{i,j,k+1} = B_{i,j,k}$.
                    \item[] Otherwise, let $B_{i,j,k+1} = \sigma_{a_{i}, k}(B_{i,j,k})$.
                \end{itemize}
                \item[] Let $B_{i,j+1} = B_{i,j,m-j-1}$.
            \end{itemize}
            \item[] Let $B_{i+1} = B_{i,m-1}$.
        \end{itemize}
        \item[] Let $\ddot{B}=B_N$.
    \end{itemize}
\end{definition}

\begin{theorem}
\label{lem:stabless ha}
If $B \subseteq m^n$ is a set of leaves with $\ell$-ary normalization $\ddot{B}$, then $\MTD_{\ell}(B) = \|\ddot{B}\|_{\ell}$. 
\end{theorem}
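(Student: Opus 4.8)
The plan is to follow the proof of Theorem~\ref{lem:stabless}, adapting it to the meeted, $m$-ary setting, where two new features must be handled: the meet function now governs how branches may split, and normalization sorts $m$ children at once rather than comparing two. Since each $\sigma_{a,k}$ is an automorphism of $\hat{T}_{n,m}$ (it preserves both $\prec$ and $\wedge$), the normalization induces an isomorphism $(\check{B},\prec,\wedge) \cong (\check{\ddot{B}},\prec,\wedge)$, so $\MTD_\ell(B) = \MTD_\ell(\ddot{B})$ and it suffices to show $\MTD_\ell(\ddot{B}) = \|\ddot{B}\|_\ell$. For a node $v \in \check{\ddot{B}}$, write $g(v) = \|\check{\ddot{B}}\dhr_v\|_\ell - \|v\|_\ell$ for the relative $\ell$-norm below $v$; then $g(\emptyset) = \|\ddot{B}\|_\ell$, and since passing to a child $v^\frown k$ raises the $\ell$-norm by $1$ exactly when $k \geq \ell - 1$,
\[ g(v) \;=\; \max\Bigl(\;\max_{0 \leq k < \ell - 1} g(v^\frown k),\ \ 1 + \max_{\ell - 1 \leq k < m} g(v^\frown k)\Bigr), \]
with both inner maxima taken over present children.

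For the upper bound I would show by induction on $d$ that whenever $\hat{T}_{d,\ell}$ embeds into $(\check{\ddot{B}}\dhr_v,\prec,\wedge)$ we have $g(v) \geq d$. The key use of the meet is that the $\ell$ children of the root of $\hat{T}_{d,\ell}$ have pairwise meet equal to the root, so under an embedding $f$ their images have pairwise meet $r := f(\mathrm{root})$ and therefore enter $\ell$ \emph{distinct} children $r^\frown t_0, \dots, r^\frown t_{\ell-1}$ of $r$. Among any $\ell$ distinct values in $\{0, \dots, m-1\}$ at least one is $\geq \ell - 1$, since only $\ell - 1$ values are smaller; choosing such a $t_{i_0}$ and applying the inductive hypothesis to the copy of $\hat{T}_{d-1,\ell}$ lying below $r^\frown t_{i_0}$ gives $g(r^\frown t_{i_0}) \geq d-1$, whence $g(v) \geq g(r^\frown t_{i_0}) + 1 \geq d$ because $\|r^\frown t_{i_0}\|_\ell = \|r\|_\ell + 1 \geq \|v\|_\ell + 1$. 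Taking $v = \emptyset$ yields $\MTD_\ell(\ddot{B}) \leq \|\ddot{B}\|_\ell$. It is precisely here that $\wedge$ is indispensable: under $\prec$ alone the $\ell$ branches could reconverge into a single small child, which is why the analogous bound fails for $\TD_\ell$ when $\ell \geq 3$.

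For the lower bound I would first extract from the definition of $\ddot{B}$ its essential structural feature: at every node $v$ the children are sorted in weakly decreasing relative norm,
\[ g(v^\frown 0) \;\geq\; g(v^\frown 1) \;\geq\; \cdots \;\geq\; g(v^\frown(m-1)), \]
this being exactly the effect of the nested bubble sort, whose comparisons test relative $\ell$-norms. Because nodes are enumerated longest-first, the subtree below each child is already final when $v$ is processed, and the later operations at shorter nodes relocate the subtree below $v$ bodily, shifting all its $\ell$-norms by a constant and so preserving the order of the $g(v^\frown k)$. Granting this, I would prove by downward induction on $|v|$ that $\hat{T}_{g(v),\ell}$ embeds into $(\check{\ddot{B}}\dhr_v,\prec,\wedge)$, the base case $g(v)=0$ being trivial. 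Writing $d = g(v)$, sortedness makes the two inner maxima in the recursion above occur at $k = 0$ and $k = \ell - 1$, so $d$ equals $g(v^\frown 0)$ or $g(v^\frown(\ell-1)) + 1$. In the first case a copy of $\hat{T}_{d,\ell}$ already embeds below the child $v^\frown 0$; in the second (so $g(v^\frown(\ell-1)) = d - 1$), sortedness makes slots $0, \dots, \ell - 1$ into $\ell$ present children of distinct values, each of relative norm at least $d - 1$, hence each supporting a copy of $\hat{T}_{d-1,\ell}$ by induction. Gluing these $\ell$ copies at $v$ — where, since they sit beneath distinct children, every cross meet collapses to $v$ and $\wedge$ is respected — produces $\hat{T}_{d,\ell}$. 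Taking $v = \emptyset$ gives $\MTD_\ell(\ddot{B}) \geq \|\ddot{B}\|_\ell$.

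I expect the main obstacle to lie in the lower bound, namely in converting the somewhat intricate normalization procedure into the clean sorted property and then feeding it through the gluing step: one must check that it is relative $\ell$-norm, not absolute norm, that $\sigma_{a,k}$ sorts and that this order survives all later operations; that the best big child is forced into slot $\ell - 1$; and that the $\ell$ glued subtrees meet exactly at $v$ so that the embedding preserves $\wedge$. By contrast the pigeonhole observation — that among $\ell$ distinct child-values at least one is $\geq \ell - 1$ — is immediate, yet it is the common hinge of both directions and accounts for the exponents $\ell - 1$ and $m - \ell + 1$ appearing in Theorem~\ref{thm:ltd ha}.
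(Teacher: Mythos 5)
Your proposal is correct and follows essentially the same route as the paper: bound $\MTD_{\ell}(\ddot{B})$ above by $\|\ddot{B}\|_{\ell}$, and below by using the fact that normalization sorts siblings in weakly decreasing relative $\ell$-ary norm so that $\ell$ copies of $\hat{T}_{d-1,\ell}$ can be glued beneath distinct children of a common node, which respects $\wedge$. Your write-up is in fact more explicit than the paper's terse proof on the two points it leaves implicit: that the meet function (via your pigeonhole on $\ell$ distinct child values) is what makes the upper bound go through, and that the sorted order of relative norms survives the later swaps performed at shorter nodes.
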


\begin{proof}
Clearly, we have $\MTD_{\ell}(B) \leq \|\ddot{B}\|_{\ell}$ since $T_{\|\ddot{B}\|_{\ell}+1,\ell}$ cannot embed in \[
(\{a\in m^{\leq n}: \|a\|_{\ell}\leq \|\ddot{B}\|_{\ell}\},\prec).
\] Thus, it is enough to show that  $\MTD_{\ell}(B)\geq d$ whenever $\|\ddot{B}\|_{\ell}\geq d$. This is obviously true when $d\leq 0$. 
Assume it holds for some $d\geq 0$, and suppose we have $b\in \ddot{B}$ with $\|b\|_{\ell} \geq d+1$. Let $a$ be the largest initial segment of $b$ such that $\|a\|_{\ell}=0$. By our assumption, it follows that $T_{d,\ell}$ embeds in $(\check{\ddot{B}}\dhr_{a^\frown k},\prec)$ for each $k< \ell$.
\end{proof}

\begin{cor}\label{cor:isotp max td ha}
Given a set of leaves $B\subseteq m^n$, if $\MTD_{\ell}(B) = d^+$, then \[(\check{B},\prec) \cong (\{a\in m^{\leq n} : \|a\|_{\ell} \leq d\},\prec).\]
\end{cor}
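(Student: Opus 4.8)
The plan is to reduce to the normalized case and then identify $\ddot B$ exactly, rather than merely up to isomorphism. Write $B^{*} = \{b \in m^{n} : \|b\|_{\ell} \le d\}$; since a node $a$ lies on a branch of $B^{*}$ precisely when some leaf $b \succeq a$ satisfies $\|b\|_{\ell} \le d$, which (appending zeros) happens exactly when $\|a\|_{\ell} \le d$, we have $\check{B^{*}} = \{a \in m^{\le n} : \|a\|_{\ell} \le d\}$. So the corollary asks for $(\check B, \prec) \cong (\check{B^{*}}, \prec)$, and I would obtain the stronger fact $\check{\ddot B} = \check{B^{*}}$. First I would observe that each $\sigma_{a,k}$ is an automorphism of $\widetilde T_{n,m}$ (it preserves $\prec$, $\wedge$, and $\sim$), so the composite automorphism carrying $B$ to $\ddot B$ preserves $\MTD_{\ell}$, sends maximal leaf sets to maximal leaf sets, and induces an isomorphism $(\check B, \prec) \cong (\check{\ddot B}, \prec)$. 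Thus it suffices to prove the corollary with $\ddot B$ in place of $B$. By Theorem~\ref{lem:stabless ha}, $\|\ddot B\|_{\ell} = \MTD_{\ell}(B) = d$, so $\ddot B \subseteq B^{*}$, and $\ddot B$ is still a maximal leaf set with $\MTD_{\ell}(\ddot B) = d^{+}$. The goal is now to show $\ddot B = B^{*}$.

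The key intermediate step is to compute $\MTD_{\ell}(B^{*}) = d$. The lower bound is immediate from the tightness remark $\LTD_{\ell}(B^{*}) = d$ together with $\LTD_{\ell} \le \MTD_{\ell}$. For the upper bound I would check that $B^{*}$ is already normalized: at any node $a$, the children split into a light block $k < \ell-1$ and a heavy block $k \ge \ell-1$, and a direct computation shows the comparison quantity $\|B^{*}\!\dhr_{a^{\frown}k}\|_{\ell} - \|a^{\frown}k\|_{\ell}$ is constant across each block and drops by one from the light block to the heavy block, hence is non-increasing in $k$. Therefore the normalization procedure performs no swaps, so $\ddot{B^{*}} = B^{*}$, and Theorem~\ref{lem:stabless ha} gives $\MTD_{\ell}(B^{*}) = \|B^{*}\|_{\ell} = d$. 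I expect this to be the main obstacle, and it is exactly the point where the hypothesis must be $\MTD_{\ell}$ rather than $\TD_{\ell}$: for $\ell \ge 3$ one can have $\TD_{\ell}(B^{*}) > d$, so this upper bound cannot be read off from tree dimension and instead has to exploit the meet structure through Theorem~\ref{lem:stabless ha}.

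Finally I would conclude by a monotonicity-plus-maximality argument. If $\check{B_{1}} \subseteq \check{B_{2}}$, then any embedding of $\hat T_{e,\ell}$ into $(\check{B_{1}}, \prec, \wedge)$ is also one into $(\check{B_{2}}, \prec, \wedge)$, so $\MTD_{\ell}$ is monotone under branch-set inclusion. Suppose toward a contradiction that $\ddot B \ne B^{*}$, and choose a leaf $c \in B^{*} \setminus \ddot B$. Maximality of $\ddot B$ forces $\MTD_{\ell}(\ddot B \cup \{c\}) > d$, while $\ddot B \cup \{c\} \subseteq B^{*}$ yields $\MTD_{\ell}(\ddot B \cup \{c\}) \le \MTD_{\ell}(B^{*}) = d$, a contradiction. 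Hence $\ddot B = B^{*}$, so $\check{\ddot B} = \check{B^{*}} = \{a \in m^{\le n} : \|a\|_{\ell} \le d\}$, and the isomorphism $(\check B, \prec) \cong (\check{\ddot B}, \prec)$ from the first step completes the proof.
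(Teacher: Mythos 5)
Your proof is correct, and it follows the route the paper intends: the corollary is stated as an immediate consequence of Theorem~\ref{lem:stabless ha}, namely that normalization (an automorphism of the tree) reduces to the case $\|\ddot B\|_{\ell}=d$, so $\ddot B\subseteq\{b:\|b\|_{\ell}\le d\}$, and maximality together with $\MTD_{\ell}(\{b:\|b\|_{\ell}\le d\})\le d$ forces equality. The paper omits these details entirely; your write-up correctly supplies the two points that actually need checking --- that $\{b:\|b\|_{\ell}\le d\}$ is already normalized so its $\MTD_{\ell}$ is exactly $d$, and that $\MTD_{\ell}$ is monotone under inclusion of leaf sets --- and your observation that $\TD_{\ell}$ would not suffice here for $\ell\ge 3$ is accurate.
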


\section*{Appendix. Model-Theoretic Embeddings and Isomorphisms}
In model theory, we say an injective map between two structures is an  \emph{embedding} if it preserves the interpretation of each symbol named in the common language of those structures. If an embedding is bijective, we call it an \emph{isomorphism}. 

Whether a map is an embedding depends not only on the way in which it takes elements from domain to codomain but also on the structural context in which we view those domains. In this paper, we consider three kinds of structures, each giving rise to its own notion of embedding. When considering substructures $(A,\prec, \wedge, \sim)$ and $(B,\prec, \wedge, \sim)$ of a leveled tree $\widetilde{T}_k$, we call an injective map $f: A\to B$ an embedding if the following hold for all $a, a'\in A$:
\begin{enumerate}
	\item $a \prec a'$ if and only if $f(a)\prec f(a')$, 
	\item $f(a\wedge a') = f(a) \wedge f(a')$,
	\item $a\sim a'$ if and only if $f(a)\sim f(a')$.
\end{enumerate}
However, to consider the same map an embedding between substructures $(A,\prec)$ and $(B, \prec)$ of the tree $T_k$, we only require (1) to hold. Finally, if both (1) and (2) hold, then $f$ is an embedding between substructures $(A,\prec,\wedge)$ and $(B, \prec,\wedge)$ of the meeted tree $\hat{T}_k$.

For a more detailed review of model-theoretic structures, embeddings, and isomorphisms, see Chapter 1 of \cite{marker:modeltheory}, particularly Definitions 1.1.2 and 1.1.3.

\section*{Acknowledgments} 
The author would like to thank David Marker, Artem Chernikov, and Caroline Terry for helpful comments and suggestions. 

\bibliography{bib}
\bibliographystyle{amsplain}
\end{document}